\documentclass[12pt]{amsart}
\usepackage{mathrsfs}
\usepackage{graphicx} 

\newcommand{\hide}[1]{}

\usepackage{amssymb}
\usepackage{amsmath}

\usepackage{geometry}
\usepackage{caption}

\usepackage{wrapfig}

\usepackage{float}

\usepackage{bm}

\DeclareMathAlphabet{\mathdutchcal}{U}{dutchcal}{m}{n}
\SetMathAlphabet{\mathdutchcal}{bold}{U}{dutchcal}{b}{n}
\DeclareMathAlphabet{\mathdutchbcal}{U}{dutchcal}{b}{n}

\usepackage{tikz}
\usetikzlibrary{arrows, arrows.meta}

\def\textcolor#1{}

\newcommand{\R}{\mathbb{R}}

\newcommand{\Hh}{\mathbb{H}}

\newcommand{\Ss}{\mathcal{S}}
\newcommand{\Bb}{\mathcal{B}}

\renewcommand{\tilde}{\widetilde}
\renewcommand{\rho}{\varrho}
\renewcommand{\phi}{\varphi}
\newcommand{\eps}{\varepsilon}
\renewcommand{\theta}{\vartheta}

\DeclareMathOperator{\TC}{\mathrm{TC}}

\renewcommand{\ge}{\geqslant}
\renewcommand{\le}{\leqslant}

\usepackage[colorlinks=true,urlcolor=blue]{hyperref}

\theoremstyle{theorem}
\newtheorem{theorem}{Theorem}[section]

\newtheorem*{main}{Main Theorem}

\newtheorem*{BorisenkoConj}{Borisenko's Conjecture}

\newtheoremstyle{Intro}
{}
{}
{\itshape}
{}
{\scshape}
{.}
{.5em}
{}

\theoremstyle{Intro}

\theoremstyle{definition}
\newtheorem{definition}[theorem]{Definition}

\theoremstyle{remark}
\newtheorem*{remark}{\textsc{Remark}}

\newcounter{reminder}

\newtheoremstyle{claim}
  {}
  {}
  {\itshape}
  {0pt}
  {\scshape}
  {.}
  { }
  {\thmname{#1}\thmnumber{ #2}\thmnote{ (#3)}}

\theoremstyle{claim}

\numberwithin{equation}{section}

\title[A reverse isoperimetric inequality in space forms]{A reverse isoperimetric inequality in three-dimensional space forms}

\author[Kostiantyn Drach]{Kostiantyn Drach}
\thanks{\noindent 
	{ \it Keywords: } $\lambda$-convexity; reverse isoperimetric inequality; inradius; area; volume; space forms.}
\author[Gil Solanes]{Gil Solanes}
\author[Kateryna Tatarko]{Kateryna Tatarko}
	\thanks{{\it \ 2020 Mathematics Subject Classification:} 52A30, 52A38, 53C40 (Primary); 52A27, 52A40, 52B60, 53C21 (Secondary)}
\thanks{\ The first and second authors are partially supported by Agencia Estatal de Investigaci\'on through Grants PID2023-147252NB-I00 and PID2024-157757NB-I00
 respectively, and through the Severo Ochoa and Maria de Maeztu Program for Centers and Units of Excellence in R\&D (CEX2020-001084-M). The third author is partially supported by NSERC Discovery Grant number 2022-02961.}

\date{}

\address{Universitat de Barcelona, Gran Via de les Corts Catalanes, 585, 08007 Barcelona, Spain}
\address{Centre de Recerca Matem\`atica, Edifici C, Campus de Bellaterra, 08193 Bellaterra, Barcelona, Spain}
\email{kostiantyn.drach@ub.edu}
\address{Departament de Matem\`atiques, Universitat Aut\`onoma de Barcelona, 08193 Bellaterra, Barcelona, Spain}
\address{Centre de Recerca Matem\`atica, Edifici C, Campus de Bellaterra, 08193 Bellaterra, Barcelona, Spain}
\email{gil.solanes@uab.cat}
\address{Department of Pure Mathematics, University of Waterloo, Waterloo, ON, N2L 3G1, Canada}
\email{ktatarko@uwaterloo.ca}

\begin{document}

\begin{abstract}
A \emph{$\lambda$-convex body} in a three-dimensional space form $M^3(c)$ of constant curvature $c$ is a compact convex set $K$ whose boundary $\partial K$ has normal curvatures bounded below by a constant $\lambda>0$ (in a weak sense).

Within this class, we prove a sharp reverse isoperimetric inequality: among all $\lambda$-convex bodies in $M^3(c)$, with a fixed surface area, the body of minimal volume is the \emph{$\lambda$-convex lens}, i.e., the domain bounded by two totally umbilical caps of curvature $\lambda$. Moreover, this minimizer is unique.

This result confirms \emph{Borisenko’s Conjecture} in the three-dimensional model spaces of constant curvature for $c\neq 0$, and complements recent progress on the conjecture in the Euclidean case $c=0$. As a by-product, our method also yields an alternative proof of the corresponding reverse isoperimetric inequality in two-dimensional hyperbolic space.
\end{abstract}

\maketitle

\section{Introduction}

 Let $n \ge 2$, and let $M^n(c)$ be an $n$-dimensional model space of constant sectional curvature $c \in \R$. A subset $K \subset M^n(c)$ is convex if a geodesic segment between any two points in $K$ is contained in $K$ (in the spherical space we also assume that $K$ lies in an open hemisphere). We say that $K \subset M^n(c)$ is a convex body if it is a compact, convex subset with non-empty interior. 
 
 For a given $\lambda > 0$, a convex body $K \subset M^n(c)$ is called \emph{$\lambda$-convex} if the principal curvatures $k_1,\ldots,k_{n-1}$ with respect to inward pointing normals of the boundary $\partial K$ satisfy
\[
k_i(p) \ge \lambda \qquad i=1,\ldots,n-1,\quad \forall p \in \partial K.
\]
If the boundary of $K$ is not smooth, then the bound above is understood in the weak sense, which we will make precise in Definition~\ref{Def:LambdaConvex}. A simple but important example of a $\lambda$-convex body is the so-called {\em $\lambda$-convex lens}, which is the domain bounded by two totally umbilical hypersurfaces of normal curvature $\lambda$.

The class of $\lambda$-convex bodies has attracted a lot of attention in convex and discrete geometry in the past two decades, e.g.,  \cite{AAF, AACF, Bezdek2008, Bezdek2012, FRS, JMR, SWY}. In particular, it naturally appears in connection with the study of the Kneser--Poulsen conjecture and bodies of constant width. We refer the reader to the surveys \cite{BLN, BLNP} for a comprehensive overview of $\lambda$-convexity and related topics.

In recent years, the \emph{reverse} isoperimetric problems in the class of $\lambda$-convex bodies have received considerable interest. One of the central open problems in this direction is the following conjecture of Borisenko. We will always denote by $|A|$ the $m$-dimensional volume of a compact submanifold with boundary $A^m\subset M^n(c)$.

\begin{BorisenkoConj}
Let $K \subset M^n(c)$ be a $\lambda$-convex body, $n \ge 2$, and let $L \subset M^n(c)$ be a $\lambda$-convex \emph{lens}. If $|\partial K| = |\partial L|$, then $|K| \ge |L|$, with equality if and only if $K$ is a $\lambda$-convex lens.
\end{BorisenkoConj}

In Euclidean space, this conjecture was established for $n=2$ in \cite{BorDr14} (see also the alternative proof in \cite{FKV}) and, very recently, for $n=3$ in \cite{DT}. For dimensions $n \ge 4$, the conjecture remains completely open (see also \cite{Julian} where the authors prove that the solution to this conjecture cannot have a smooth boundary).

In nonflat space forms, progress has thus far been restricted to the two-dimensional case: the conjecture was fully resolved for all two-dimensional spaces of constant curvature and certain natural generalizations in \cite{BorDr15_1, DrLob, BorNA}.

In the present paper, we make a further nontrivial advance by resolving Borisenko’s conjecture affirmatively in three-dimensional space forms of nonzero curvature. Combined with the Euclidean result of \cite{DT}, this completes the verification of the conjecture in all three-dimensional space forms. Our main result is the following.

\begin{main}[Reverse isoperimetric inequality in nonflat space forms]
\label{Thm:Main1}
Let $K \subset M^3(c)$, $c \neq 0$, be a $\lambda$-convex body, and let $L \subset M^3(c)$ be a $\lambda$-convex lens. If $|\partial K| = |\partial L|$, then $|K| \ge |L|$, with equality if and only if $K$ is a $\lambda$-convex lens.
\end{main}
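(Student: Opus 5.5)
We would follow the strategy of the Euclidean proof in \cite{DT}, adapted to curvature $c\neq 0$: reduce the comparison of volume and area to an inequality in terms of the inradius, and then use the fact that among $\lambda$-convex bodies the lens is extremal for the appropriate inradius-type functional. Since both $|K|$ and $|\partial K|$ enter, the natural tool is the family of inner parallel bodies $K_{-t}=\{x\in K: d(x,\partial K)\ge t\}$, $0\le t\le r$, where $r$ is the inradius of $K$. By the coarea formula, $|K|=\int_0^r |\partial K_{-t}|\,dt$. The task is then to bound $|\partial K_{-t}|$ from below in terms of $|\partial K|$ and $t$, with equality for lenses.

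The first step is a comparison for inner parallel surfaces. Where $\partial K_{-t}$ comes from normal geodesics of $\partial K$, the area element evolves by the Jacobi equation in $M^3(c)$. With principal curvatures $k_i\ge\lambda$, the factor is $\prod_i(\mathrm{cs}_c(t)-k_i\,\mathrm{sn}_c(t))$, where $\mathrm{sn}_c$ and $\mathrm{cs}_c$ are the generalized sine and cosine. The parts of $\partial K$ that are cut off before time $t$ only decrease area. This yields an upper bound $|\partial K_{-t}|\le f_t(|\partial K|)$, which goes in the wrong direction. So instead we plan to work from the inside out. Let $K_{-r}$ be the core, a point, segment, or planar convex set by the ridge structure of $\lambda$-convex bodies. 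We then write $K$ as obtained from its core by parallel flow, and express $|K|$ and $|\partial K|$ through integrals over $t\in[0,r]$ of quantities involving the mean and Gauss curvature integrals of $\partial K_{-t}$. In $M^3(c)$ the Gauss–Bonnet formula fixes $\int K_e = 4\pi - c|\partial K|$, and this extra $c$-term is exactly what distinguishes the nonflat case.

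The second step combines these. We would derive a differential inequality for $A(t)=|\partial K_{-t}|$ and $V(t)=|K_{-t}|$, namely $V'=-A$, together with a Steiner/Heintze–Karcher type bound for $A'$ coming from $\lambda$-convexity. This gives an inequality $|K|\ge \Phi_\lambda(r,|\partial K|)$, with equality exactly when all normal curvatures equal $\lambda$ away from a one-dimensional singular set, that is, for lenses. Separately, a $\lambda$-convex body of area $|\partial K|$ has inradius at least that of the lens of the same area, by a Blaschke-type rolling argument: $K$ contains a suitable ball, and the lens minimizes inradius at fixed area. Finally, a monotonicity analysis of $\Phi_\lambda$ in $r$ closes the argument.

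We expect the main obstacle to be this last monotonicity/one-variable analysis. In curvature $c\neq 0$ the functions $\mathrm{sn}_c,\mathrm{cs}_c$ are trigonometric or hyperbolic, and the Gauss–Bonnet term couples area and curvature. As a result, the comparison function is no longer polynomial as in \cite{DT}, and checking that the lens is the unique minimizer requires careful estimates. For $c>0$ these must use the constraint $\lambda>\sqrt c$ implicit for lenses, and for $c<0$ they must separate the cases $\lambda>\sqrt{-c}$ and $\lambda\le\sqrt{-c}$. The equality case then follows by tracing equality through each inequality.
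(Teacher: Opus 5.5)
Your outline (volume as $\int_0^{r}|\partial K_{-t}|\,dt$, Gauss--Bonnet in $M^3(c)$ as the source of the $c$-dependence, and the inradius comparison $r(K)\ge r(L)$) has the right ingredients. The inradius comparison is the nontrivial \cite[Theorem B]{DT}, not a consequence of a rolling argument. The step that carries the whole proof is never produced, though: a \emph{lower} bound for $A'(t)$, where $A(t)=|\partial K_{-t}|$, that is sharp for lenses. The tools you name do not give it.

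Jacobi-field and Heintze--Karcher comparisons bound area from above, as you note yourself. A Steiner-type mean-curvature formula does not compute $A'$ either. Inner parallel bodies acquire edges even when $\partial K$ is smooth. Along an edge of length $\ell$ whose outer normals span an angle $\beta$, the inner parallel body loses area at rate $2\ell\tan(\beta/2)$, strictly more than the Steiner rate $\ell\beta$. For a Euclidean cube of side $a$, $\frac{d}{dt}|\partial K_{-t}|$ at $t=0$ is $-24a$, while the Steiner rate is only $6\pi a$. So a bound on total mean curvature does not bound $A'$ from below.

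Several side claims are also off:
\begin{enumerate}
\item $K$ is in general not obtained from its core by parallel flow; a cube is not a parallel body of its center.
\item ``Normal curvature $\lambda$ away from a one-dimensional singular set'' describes every $\lambda$-convex polyhedron, not only lenses.
\item For $c>0$ there is no constraint $\lambda>\sqrt c$.
\end{enumerate}

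The missing idea is to reduce to $\lambda$-convex polyhedra. Their inner parallel bodies are $\lambda(t)$-convex polyhedra with $\lambda'=\lambda^2+c$, and for them the \emph{same} edge functional appears in two places:
\begin{enumerate}
\item the variation formula $A'(t)=-2\lambda(t)A(t)-2\sum_E\ell_E\tan\frac{\beta_E}{2}$;
\item the Gauss--Bonnet formula for $\lambda$-convex polyhedra, $(\lambda^2+c)A+2\lambda\sum_E\ell_E\tan\frac{\beta_E}{2}+\sum_v|u(K,v)|=4\pi$. Its edge term is the normal-cycle integral over an edge curve of curvature $\lambda/\cos(\beta_E/2)$.
\end{enumerate}
Dropping the nonnegative vertex term gives
\[
A'(t)\ \ge\ -\frac{(\lambda(t)^2-c)\,A(t)+4\pi}{\lambda(t)},
\]
with equality for lenses. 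This is the polyhedral counterpart of integrating $(k_1-\lambda)(k_2-\lambda)\ge 0$ against Gauss--Bonnet, which is presumably what your ``Gauss--Bonnet fixes $\int K_e$'' was reaching for.

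Comparison for this linear differential inequality gives $|\partial K_{-t}|\ge|\partial L_{-t}|$ on $[0,r(L)]$. Hence $|K|\ge\int_0^{r(L)}|\partial K_{-t}|\,dt\ge|L|$ directly. No one-variable analysis of trigonometric or hyperbolic comparison functions is needed, and no case split in $\lambda$. Strictness for non-lenses comes from $r(K)>r(L)$, the rigidity case of \cite[Theorem B]{DT}. General $\lambda$-convex bodies are then handled by polyhedral approximation.
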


Our method also yields an alternative proof of the corresponding reverse isoperimetric inequality in the two-dimensional hyperbolic space, originally obtained in \cite{DrLob}.

\section{Preliminaries}

In this section, we aim to recall some general background from convex geometry and $\lambda$-convexity. We denote by $M^n(c)$ the complete simply connected $n$-dimensional Riemannian manifold of constant curvature $c$. We call $M^n(c)$ a \emph{space form} (of curvature $c$). When $c=0$, the space form is the standard Euclidean space $\R^n$. When $c > 0$, it is a standard spherical space $\mathbb S^n(c)$, and when $c < 0$, it is a standard hyperbolic space $\Hh^n(c)$.

For a given $\lambda > 0$, let $\Ss_\lambda \subset M^n(c)$ be a complete totally umbilical hypersurface of constant normal curvature equal to $\lambda$. We will call such hypersurfaces \emph{$\lambda$-spheres} and will use the notation $\Bb_\lambda$ for the convex and topologically closed region bounded by $\Ss_\lambda$; we call $\Bb_\lambda$ a \emph{$\lambda$-ball}. Note that, in the hyperbolic space $\Hh^n(-1)$, the $\lambda$-balls (and spheres) are not always geodesic balls (or spheres), as the following classification suggests:

\begin{enumerate}
\item
If $\lambda > 1$, then $\Ss_\lambda$ is a geodesic sphere of radius $\coth^{-1} \lambda$, and $\Bb_\lambda$ is the geodesic ball. 
\item
If $\lambda = 1$, then $\Ss_\lambda$ is a \emph{horosphere} and $\Bb_\lambda$ is a \emph{horoball}. 
\item
\label{It:Horo}
Finally, if $0<\lambda < 1$, then $\Ss_\lambda$ is an \emph{equidistant hypersurface} (or \emph{equidistant} for short), i.e., the set of points at a given distance from a totally geodesic hyperplane. In the literature, equidistants are also called \emph{hyperspheres}. 
\end{enumerate}

In the Poincar\'e unit ball model of $\Hh^n(-1)$, the $\lambda$-spheres $\Ss_\lambda$ are the intersections of the unit ball with Euclidean spheres that: lie inside the unit ball if $\lambda > 1$, touch the unit sphere tangentially if $\lambda = 1$, and intersect the unit sphere at an angle different from $\pi/2$ if $0<
\lambda < 1$.

\subsection{$\lambda$-convex bodies}	
\label{SSec:LConv}
Fix some $\lambda>0$. 

\begin{definition}[$\lambda$-convex body]
\label{Def:LambdaConvex}
A convex body $K \subset M^n(c)$ is \emph{$\lambda$-convex} if for each $p \in \partial K$ there exists a neighborhood $U_p \subset M^n(c)$ and a $\lambda$-sphere $\Ss_\lambda$ that passes through $p$ in such a way that
\[
U_p \cap \partial K \subset \Bb_\lambda,
\]
where $\Bb_\lambda$ is the corresponding $\lambda$-ball.
\end{definition}

In this case, we call $\Ss_\lambda$ a \emph{supporting $\lambda$-sphere} passing through $p \in \partial K$.

The following classical theorem of Blaschke turns the local condition in Definition~\ref{Def:LambdaConvex} into a global one.

\begin{theorem}[Blaschke's Rolling Theorem, \cite{Bla56,DrBla}]
\label{Thm:Bla}
Let $K \subset M^n(c)$ be a $\lambda$-convex body. Then for every $p \in \partial K$ there exists a supporting $\lambda$-sphere $\Ss_{\lambda}(p)$ bounding the $\lambda$-ball $\Bb_\lambda(p)$ such that $K  \subseteq  \Bb_{\lambda}(p)$. \qed
\end{theorem}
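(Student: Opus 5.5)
The statement in question is Blaschke's Rolling Theorem (Theorem~\ref{Thm:Bla}): a local supporting $\lambda$-sphere at each boundary point can be upgraded to a global one. The plan is a continuity/connectedness argument along geodesics from $p$, combined with a comparison of $\lambda$-balls in $M^n(c)$.

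\emph{Step 1 (choice of the candidate ball).} Fix $p\in\partial K$ and let $\nu$ be an inward unit normal at $p$ for which the local condition of Definition~\ref{Def:LambdaConvex} holds. Let $\Bb_\lambda(p)$ be the unique $\lambda$-ball whose boundary passes through $p$ with inward normal $\nu$. Since $K$ is convex, a supporting hyperplane at $p$ exists. The local $\lambda$-sphere from the definition is tangent to such a supporting hyperplane, so it is exactly this $\Ss_\lambda$. If $\partial K$ is not smooth at $p$, we take the normal provided by the definition.

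\emph{Step 2 (reduction to a one-dimensional statement).} Suppose $K\not\subseteq \Bb_\lambda(p)$, and pick $q\in K\setminus \Bb_\lambda(p)$. Consider the two-dimensional totally geodesic plane $P$ through $p$ containing $\nu$ and $q$. The intersection $K\cap P$ is a convex body in $M^2(c)$. It is again $\lambda$-convex in the sense of Definition~\ref{Def:LambdaConvex}, because $\lambda$-spheres meet such a plane in curves of geodesic curvature $\ge\lambda$; these curves bound regions contained in $\lambda$-discs of $P$ tangent at the same point. Likewise $\Bb_\lambda(p)\cap P$ is the $\lambda$-disc tangent at $p$. So it suffices to prove the theorem for curves in $M^2(c)$.

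\emph{Step 3 (planar case).} In $M^2(c)$, parametrize $\partial(K\cap P)$ by arclength starting from $p$ and compare it with the $\lambda$-circle through $p$. Use, in each local chart, a normal/Fermi-type coordinate adapted to the $\lambda$-circle, and compare the turning of tangents. The local condition means the boundary curve has curvature $\ge\lambda$ in the barrier (viscosity) sense. A Sturm-type, or Schur/Blaschke comparison for curves with curvature bounded below by $\lambda$, then shows the following. As long as the curve has total turning at most that of the full $\lambda$-circle, it stays inside the $\lambda$-disc tangent at $p$; for a closed convex curve the turning is exactly one full turn. Concretely, I would define $t^*=\sup\{t:$ the arc $[0,t]$ lies in $\Bb_\lambda(p)\}$. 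I would show $t^*>0$ by the local condition, and then show that $t^*$ cannot be an interior first exit time, by applying the local condition at the exit point together with convexity. A first exit point would have to be a tangency of a curve of curvature $\ge\lambda$ with the $\lambda$-circle from inside, which is ruled out by comparison.

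\emph{Main obstacle.} The hardest step is Step 3's comparison in the weak (non-smooth) setting and across the three geometries for $c<0$: $\lambda$-circles may be horocycles or equidistants, which are noncompact. There one must use convexity of $K$ to ensure the curve closes up before exiting. I would handle this by approximating $K\cap P$ from outside by smooth curves of curvature $\ge\lambda-\varepsilon$, for instance via intersections of $\lambda$-discs. I would then prove the smooth comparison via the support-function ODE $h''+ch\le 1/\lambda$-type inequality in the appropriate model coordinates, and finally let $\varepsilon\to0$.
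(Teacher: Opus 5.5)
\textbf{The paper gives no proof of this statement.} It is quoted from Blaschke and Drach, so I assess your sketch on its own merits.

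\textbf{Steps~1--2 are essentially sound.} A $\lambda$-sphere meets a totally geodesic plane in a circle of geodesic curvature $\lambda/\cos\phi\ge\lambda$. That circle lies in the $\lambda$-disc tangent at the same point. A plane containing $\nu$ cuts $\Bb_\lambda(p)$ in the $\lambda$-disc tangent at $p$. (To avoid degenerate slices, choose $q$ in the interior of $K$; this is possible because $K\setminus\Bb_\lambda(p)$ is relatively open in $K$.)

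\textbf{The genuine gap is Step~3: both of its key claims are false.} Take $c=0$, $\lambda=1$, and let $D$ be the unit disc tangent at $p$.
\begin{enumerate}
\item Follow $\partial D$ from $p$ for a quarter turn, to a point $a$.
\item Make a convex corner of angle $\alpha\in(0,\pi)$ at $a$. Here $D$ itself is a local supporting disc.
\item Continue along the unit circle through $a$ with the new tangent.
\end{enumerate}
This second circle enters $D$ and leaves it \emph{transversally} at the second intersection point $b$ of the two unit circles. The total turning up to $b$ is $\pi/2+\alpha+(\pi-\alpha)=3\pi/2<2\pi$. The arc is convex and satisfies the local condition of Definition~\ref{Def:LambdaConvex} at every point, including $b$, where the second circle is a local supporting circle. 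Consequently:
\begin{itemize}
\item ``turning at most a full turn keeps the curve inside'' is wrong;
\item a first exit point need not be a tangency;
\item no local information at the exit point, even combined with convexity of the arc, can give a contradiction, because the same local picture occurs for arcs that really do exit.
\end{itemize}

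\textbf{What is missing is a global, two-sided argument.} The comparison holds only up to turning $\pi$ on each side of $p$. It is the closedness of $\partial(K\cap P)$ that makes these two half-turns cover the whole curve.

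In the Euclidean model case the clean route is via support functions. Let $h$ be the support function of $K\cap P$, and $h_D$ that of the $\lambda$-disc with outer normal angle $\theta_0$ at $p$, where $\theta_0$ is any angle in the normal cone at $p$. Then $f=h_D-h$ satisfies $f''+f=\lambda^{-1}-\rho\ge0$ in the sense of measures, where $\rho\,d\theta$ is arclength measured against the normal angle. This is the correct way to encode the local condition; ``$h''+ch\le1/\lambda$'' is not the right equation even for $c=0$. Moreover $f(\theta_0)=f'(\theta_0)=0$, and the Sturm formula gives $f\ge0$ on $[\theta_0-\pi,\theta_0+\pi]$, that is, in all directions.

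\textbf{The case $c\ne0$ is not covered.} Even the bookkeeping changes: the total geodesic curvature of a closed convex curve is $2\pi-c\,|K\cap P|$, not a full turn. You would need a genuine analogue of this comparison in $M^2(c)$, including the noncompact $\lambda$-circles that occur when $c<0$. Your sketch does not supply one, and this is the real content of the cited results.

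\textbf{The proposed smoothing is circular.} Approximating $K\cap P$ from outside by intersections of $\lambda$-discs presupposes that $K\cap P$ is such an intersection, which is essentially the global statement you are trying to prove.
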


Finally, let us point out an equivalent definition of $\lambda$-convexity which is, in some sense, dual to Definition~\ref{Def:LambdaConvex}. The equivalence can be seen using Theorem~\ref{Thm:Bla} (see also \cite[Lemma 7]{BorDr13} for the spaces of constant curvature, and \cite[Corollary 3.4]{BLNP} for the equivalence in the Euclidean space). A convex body $K \subset M^n(c)$ is said to be $\lambda$-convex if and only if any pair of points in $K$ can be connected by the shortest arc of a curve of constant geodesic curvature $\lambda$ and this arc lies entirely in $K$. From this point of view, $\lambda$-convexity is a generalization of the standard notion of convexity, where we say that a body is convex if and only if the line segment that connects any two points lies entirely inside the body.   
It follows that the whole spindle hypersurface, that is, the locus of all arcs connecting the two points, and the domain bounded by this hypersurface must lie inside the body. 

\begin{definition}[$\lambda$-convex polyhedron]
	\label{Def:LambdaConvexPol}
A \emph{$\lambda$-convex polyhedron} in $M^n(c)$ is a $\lambda$-convex body given as the intersection of finitely many $\lambda$-balls $\Bb_\lambda$. Each \emph{facet} of the $\lambda$-convex polyhedron is the intersection of the corresponding $\Bb_\lambda$ with the boundary of the polyhedron. A facet is of dimension $n-1$. Two facets are either disjoint or intersect. In the latter case, their intersection is called a (lower-dimensional) \emph{face}. The faces of dimension $0$ and $1$ are called {\it vertices} and {\it edges} of the polyhedron, respectively.

When $n=3$, we will denote by $\mathcal E$ and $\mathcal V$ the sets of edges and vertices of a $\lambda$-convex polyhedron $K$. For each edge $E \in \mathcal E$, we will denote by $\beta_E$ the angle between the outer normals at $E$ to the faces intersecting along $E$, and by $\ell_E$ the length of $E$. Also, for each vertex $v \in \mathcal V$, we will let $u(K,v)$ be the region in the unit sphere of $T_pM^3(c)$ covered by the unit outer normals to $K$ at $v$. 
\end{definition}

\begin{definition}[$\lambda$-convex lens]	
A $\lambda$-convex polyhedron with exactly two facets is called a \emph{$\lambda$-convex lens}. 
\end{definition}

\section{The Gauss--Bonnet theorem in constant curvature 3-space}

In this section, we prove the following version of the Gauss--Bonnet theorem for $\lambda$-convex polyhedra in $M^3(c)$.

\begin{theorem}[Gauss--Bonnet for $\lambda$-convex polyhedra]
\label{Thm:GB}
Let $K \subset M^3(c)$ be a $\lambda$-convex polyhedron with the vertex set $\mathcal V$ and the edge set $\mathcal E$.  Then
\begin{equation}
    \label{Eq:GB}
    \left(\lambda^2 + c\right) \, |\partial K| + 2\lambda \sum_{E \in \mathcal E} \ell_E \cdot \tan \frac{\beta_E}{2} + \sum_{v \in \mathcal V}|u(K,v)| = 4\pi,
\end{equation}where  $|u(K,v)|$ is the spherical area of $u(K,v)$.
\end{theorem}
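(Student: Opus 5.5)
Our plan is to apply the classical Gauss--Bonnet theorem to a smooth approximation of $\partial K$ and read off each term of \eqref{Eq:GB} as a limit. Concretely, we will use the outer parallel bodies $K_t$, $t>0$ small, i.e.\ the set of points within distance $t$ of $K$. Their boundaries are $C^{1,1}$ convex surfaces homeomorphic to $S^2$, so the Gauss--Bonnet formula for surfaces in $M^3(c)$ applies. By the Gauss equation, the intrinsic curvature is $K_{\mathrm{int}} = c + k_1k_2$, hence
\[
\int_{\partial K_t} (c + k_1 k_2)\, dA = 2\pi\chi(S^2) = 4\pi .
\]
We then let $t\to 0$ and split $\partial K_t$ into three kinds of pieces: those over facets, those over edges, and those over vertices.

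Over a facet $F$, contained in a $\lambda$-sphere, the parallel piece lies on a parallel totally umbilical surface whose principal curvatures tend to $\lambda$ and whose area tends to $|F|$. Its contribution is therefore $(\lambda^2+c)|F| + o(1)$, and summing over facets gives $(\lambda^2+c)|\partial K|$.

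Over a vertex $v$, the piece is, up to $O(t)$ corrections, a portion of the geodesic sphere of radius $t$ whose normals sweep $u(K,v)$. Its principal curvatures are $\approx 1/t$ and its area is $\approx t^2|u(K,v)|$. Consequently $\int k_1k_2 \to |u(K,v)|$, while $\int c \to 0$.

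Over an edge $E$, the piece is a tube-like strip around $E$ swept by geodesic segments of length $t$ normal to $E$, ranging over the dihedral normal angle. As $t\to0$ one principal curvature is $\approx 1/t$ (in the circular direction). The other is the normal curvature of the strip in the edge direction, which should equal the normal component of the curvature vector of $E$ in the sweeping direction. We must compute this exactly.

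This edge computation is the main obstacle. The edge $E$ is the intersection of two $\lambda$-spheres. In the plane normal to $E$, let the outer unit normals of the two facets make angles $\pm\beta_E/2$ with the bisector $\nu$, and let $\theta\in[-\beta_E/2,\beta_E/2]$ parametrize the normal direction $n(\theta)$. Since $E$ lies on both $\lambda$-spheres, its curvature vector $\kappa$ satisfies $\langle \kappa, -n(\pm\beta_E/2)\rangle = \lambda$. This forces
\[
\kappa = -\frac{\lambda}{\cos(\beta_E/2)}\,\nu + (\text{component along } E\text{-normal orthogonal directions that vanishes}),
\]
so the normal curvature of the strip in direction $n(\theta)$ is $\lambda\cos\theta/\cos(\beta_E/2)$. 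The strip area element is $t\,d\theta\,ds$, so the edge contribution tends to
\[
\int_E \int_{-\beta_E/2}^{\beta_E/2} \frac{\lambda\cos\theta}{\cos(\beta_E/2)}\,d\theta\, ds = 2\lambda\,\ell_E\tan\frac{\beta_E}{2},
\]
and the $c$-term vanishes in the limit. The justification needed here is to verify the curvature formula in $M^3(c)$ to leading order in $t$, for instance via Jacobi fields or the model-specific Riccati equation for parallel surfaces, and to control the $O(t)$ errors uniformly, including near the vertex ends of each edge, where the pieces meet.

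Summing the three contributions and letting $t\to0$ gives \eqref{Eq:GB}. As a sanity check, the lens with $c=0$ reproduces the known Euclidean formula.

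An alternative that avoids the parallel bodies is to apply Gauss--Bonnet with geodesic-curvature boundary terms on each facet, and then to show that the geodesic curvature of an edge within a face plus the contributions of the angles at the vertices combine into the $\tan$ term and $|u(K,v)|$. We prefer the smoothing argument because it makes each term's origin transparent.
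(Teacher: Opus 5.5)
Your argument is correct, but it handles the non-smooth step differently from the paper. The paper works on the normal cycle $N(K)\subset SM$ and sets $\TC(K)=\int_{N(K)}\omega_{1,0}\wedge\omega_{2,0}$. It obtains $\TC(K)+c|\partial K|=4\pi$ for every convex body by combining the smooth Gauss--Bonnet theorem with the Alesker--Fu continuity of $\TC$ in the Hausdorff topology. It then evaluates $\TC$ stratum by stratum. On the edge stratum, a Frenet-frame computation gives $k\cos\alpha\,d\alpha\wedge ds$ with $k\cos(\beta/2)=\lambda$, which is exactly your $\kappa=-\frac{\lambda}{\cos(\beta_E/2)}\nu$.

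You instead apply the classical Gauss--Bonnet theorem to the outer parallel surfaces $\partial K_t$. These are $C^1$ and piecewise smooth, and they are the images of the same three strata of $N(K)$ under the normal exponential map. This avoids normal cycles and the continuity theorem, at the price of computing curvatures of parallel surfaces in $M^3(c)$.

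That computation is simpler than you expect, and no uniform error control is needed, since the strata meet only along curves. Let $\mathrm{sn}_c,\mathrm{cs}_c$ be the solutions of $y''+cy=0$ with $\mathrm{sn}_c(0)=0$, $\mathrm{sn}_c'(0)=1$, $\mathrm{cs}_c(0)=1$, $\mathrm{cs}_c'(0)=0$, so that $\mathrm{cs}_c^2+c\,\mathrm{sn}_c^2=1$. Put $J=\mathrm{cs}_c+\mu\,\mathrm{sn}_c$ with $\mu=\lambda\cos\theta/\cos(\beta_E/2)$. Then the edge strip has principal curvatures $\mathrm{cs}_c/\mathrm{sn}_c$ and $J'/J$, and area element $J\,\mathrm{sn}_c\,ds\,d\theta$. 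Hence $(k_1k_2+c)\,dA=\mu\,ds\,d\theta$ exactly, for every small $t$.

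The same holds for the other strata. The vertex pieces contribute exactly $(\mathrm{cs}_c^2+c\,\mathrm{sn}_c^2)|u(K,v)|=|u(K,v)|$. The facet pieces contribute exactly $(J'^2+cJ^2)|F|=(\lambda^2+c)|F|$, now with $J=\mathrm{cs}_c+\lambda\,\mathrm{sn}_c$. So each stratum's contribution is independent of $t$, and the limit $t\to0$ is trivial. Your computation is the paper's computation transported from $N(K)$ to $\partial K_t$.
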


\begin{proof}
Put $M=M^3(c)$, and let $\pi\colon SM\to M$ be the sphere bundle of unit tangent vectors of $M$. Given a convex body $K\subset M$, let $N(K)$ be the set of unit outer normal vectors of $K$. It is a bi-Lipschitz submanifold of $SM$. 

A local frame on $SM$ is a collection $e_0,e_1,e_2\colon U\to SM$ defined on some open set $U\subset SM$ such that $e_0(\xi)=\xi$ and $e_0(\xi),e_1(\xi),e_2(\xi)$ are orthonormal $\forall\xi\in U$.
Given such a local frame consider the differential forms $\omega_{1,0},\omega_{2,0}\in \Omega^1(SM)$ defined by 
\[
\omega_{1,0}(X)=\langle e_1,\nabla_X e_0\rangle,\qquad \omega_{2,0}(X)=\langle e_2,\nabla_X e_0\rangle,\qquad X\in T SM.
\]
Here $TSM$ denotes the tangent bundle of $SM$, and $\nabla$ is the connection on $TSM$ obtained by pulling back the Levi-Civita connection of $TM$ through $\pi\colon SM\to M$. 
It is easy to check that $\omega_{1,0}\wedge\omega_{2,0}$ does not depend on the choice of $e_1,e_2$, so we may define
\[
\TC(K)=\int_{N(K)} \omega_{1,0}\wedge\omega_{2,0}.
\]
By \cite[Theorem 2.1.13]{AleskerFu}, the functional $\TC$ is continuous with respect to the Hausdorff topology on the set of convex bodies of $M$. When $\partial K$ is smooth, $\TC(K)$ is the integral of the product $k_1k_2$ of principal curvatures. Since $k_1k_2+c$ equals the Riemannian curvature of $\partial K$, the Gauss--Bonnet theorem reads
\begin{equation}
\label{Eq:First}
\TC(K)+c|\partial K|=4\pi.
\end{equation}
By continuity, the previous equality holds for all convex bodies $K$, smooth or not.

Let $K$ be a $\lambda$-convex polyhedron and denote by $\mathcal V,\mathcal E,\mathcal F$ the sets of vertices, edges, and facets of $K$. Then
\[
\TC(K)=\lambda^2|\partial K| +\sum_{E\in\mathcal E} \int_{N(K)\cap \pi^{-1}(E)}\omega_{1,0}\wedge\omega_{2,0} + \sum_{v\in\mathcal V} \int_{N(K)\cap \pi^{-1}(v)}\omega_{1,0}\wedge\omega_{2,0}.
 \]
Each term in the last sum is the area of the region $u(K,v)$ of the unit sphere $\pi^{-1}(v)$ covered by outer normal vectors of $K$ at $v$. Let us focus on the second sum. Let $E\in\mathcal E$ be the intersection of two faces $F_1,F_2\in\mathcal F$, and let $\gamma(s), s\in[0,l]$ be an arc-length parametrization of $E$. Denote by $\vec t(s),\vec n(s),\vec b(s)$ the Frenet frame along $\gamma(s)$ and put
\begin{align*}
 e_0&=-\cos(\alpha)\vec{n}+\sin(\alpha)\vec b\\
 e_1&=\sin(\alpha)\vec n+\cos(\alpha)\vec b\\
 e_2&=\vec t.
\end{align*}
Let $\beta$ be the angle between the inner normals $N_1,N_2$ to $F_1,F_2$. We have $e_0\in N(K)$ for $-\beta/2\le  \alpha \le \beta/2$.

Since
\begin{align*}
&\nabla e_0=e_1 d\alpha-\cos(\alpha)\nabla\vec n +\sin(\alpha)\nabla\vec b\\
&\nabla \vec t=k\, ds\,\vec n\\
&\nabla \vec n=-k\, ds\,\vec t\\
&\nabla \vec b=0,
\end{align*}
we have
\begin{align*}
    \omega_{1,0}&=d\alpha\\
    \omega_{2,0}&=k\cos(\alpha)ds\\
    \omega_{1,0}\wedge \omega_{2,0}&=k\cos(\alpha) d\alpha\wedge ds.
\end{align*}
Integration yields
\[
\int_{N(K)\cap\pi^{-1}(E)} \omega_{1,0}\wedge\omega_{2,0} =\int_0^l\int_{-\beta/2}^{\beta/2} k\cos(\alpha)d\alpha ds=2k\, \ell\,\sin\frac\beta2.
\]
On the other hand, we have 
\[
\lambda=\langle \frac{d\vec t}{ds}, N_1\rangle=k\langle\vec n,N_1\rangle = k\cos\frac\beta2.
\]
Therefore,
\[
\int_{N(K)\cap\pi^{-1}(E)} \omega_{1,0}\wedge\omega_{2,0}=2\lambda \ell \tan\frac\beta2.
\]
Altogether, we get
\[
\TC(K)=\lambda^2|\partial K|+2\lambda \sum_{E\in \mathcal E} \ell_E\tan \frac{\beta_E}2 + \sum_{v\in \mathcal V} \left|u(K,v)\right|.
\]
Combining with \eqref{Eq:First}, we obtain \eqref{Eq:GB}.
\end{proof}

\section{Inner parallel bodies and variation of the surface area}
\label{Sec:Derivative}

For a convex body $K \subset M^n(c)$, let $r(K)$ be its inradius (i.e., the radius of the largest ball contained in $K$). For each $t\geq0$,  the \emph{inner parallel body} $K_t$ \emph{at distance $t$} is defined as 
\[
K_t := \left\{x \in M^n(c) : B(x,t) \subseteq K\right\},
\]
where $B(x,t) \subset M^n(c)$ is a closed geodesic ball centered at $x \in M^n(c)$ of radius $t$.  Note that $K_t$ is empty for $t>r(K)$. This definition coincides with the standard definition of the inner parallel set in Euclidean space. The following relation between the volume and the surface of $K_t$ was obtained for $c=0$ in \cite{matheron}.

\begin{theorem}
\label{Eq:Der}
    If $K_t$ is the inner parallel body at distance $t \ge 0$ of a convex body $K \subset M^n(c)$, $n \ge 2$, then
    \begin{equation}
    \label{Eq:Volume}
    |K| = \int \limits_0^{r(K)} |\partial K_t| \, dt.
\end{equation}
\end{theorem}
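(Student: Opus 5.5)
We prove \eqref{Eq:Volume} by applying the coarea formula to the distance function to the boundary. Set
\[
f(x) = d(x, \partial K) = \operatorname{dist}(x, M^n(c)\setminus \operatorname{int} K), \qquad x \in K .
\]
This function is $1$-Lipschitz. We first observe that $K_t = \{x \in K : f(x) \ge t\}$. Indeed, $B(x,t)\subseteq K$ if and only if every point at distance at most $t$ from $x$ lies in $K$, and this is equivalent to $f(x)\ge t$ because $K$ is closed. In the spherical case $K$ lies in an open hemisphere and $t \le r(K)$ is small, so geodesic balls cause no trouble.

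The coarea formula for the Lipschitz function $f$ on $K$ gives
\[
|K| = \int_K |\nabla f| \, dx + |\{\nabla f \ne 1\text{-a.e. issue}\}|\cdot 0 = \int_0^{r(K)} \mathcal H^{n-1}\bigl(f^{-1}(t)\bigr)\, dt ,
\]
provided $|\nabla f| = 1$ almost everywhere in $\operatorname{int} K$. This holds for any distance function to a closed set, at every point of differentiability off that set. Every such point $x$ has a nearest boundary point $y$, and along the minimizing geodesic from $y$ to $x$ the function $f$ grows with unit speed, so the derivative of $f$ in that direction equals $1$. Combined with the Lipschitz bound, this forces $|\nabla f| = 1$. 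By Rademacher's theorem, $f$ is differentiable almost everywhere, so the identity holds.

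It remains to identify the level set $f^{-1}(t)$ with $\partial K_t$ for almost every $t \in (0, r(K))$. We need two facts.

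\begin{enumerate}
\item \emph{Convexity of $K_t$.} In $M^n(c)$ with $c \le 0$, the distance to a totally geodesic hyperplane is convex on the side containing $K$. In a hemisphere, one argues with the function $\sin d$ instead. Writing $K$ as the intersection of its supporting half-spaces, $f$ is the infimum of the distances to the supporting hyperplanes, and hence $f$ is concave on $K$. It follows that $K_t$ is convex.
\item \emph{Level set versus boundary.} For $t < r(K)$ the maximum value of $f$ exceeds $t$, so the superlevel set $K_t$ has nonempty interior. Concavity of $f$ then gives $\operatorname{int} K_t = \{f > t\}$ and $\partial K_t = \{f = t\}$.
\end{enumerate}

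Hence $\mathcal H^{n-1}(f^{-1}(t)) = |\partial K_t|$ for every $t \in (0, r(K))$, where $|\partial K_t|$ is the surface area of the convex body $K_t$. At $t = r(K)$ the set $K_t$ may be lower-dimensional, but this is a single value of $t$ and does not affect the integral. Substituting into the coarea identity yields \eqref{Eq:Volume}.

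The main obstacle is the concavity of $f$ in non-flat space forms, needed in the first fact. The hyperbolic case follows from the convexity of the distance to a totally geodesic hyperplane. In the spherical case, distance functions to great hyperspheres are not convex in general, so concavity of $f$ itself may fail. To handle this, I would show directly that $\{f \ge t\}$ is convex, using that each set $\{d(\cdot, H) \ge t\}$ for a supporting hyperplane $H$ is a convex region, namely a smaller cap, and then taking the intersection over all supporting $H$. This convexity already gives $\partial K_t = \{f = t\}$, and the Lipschitz coarea computation is unaffected.
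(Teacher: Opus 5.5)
Your overall strategy is the paper's: apply the coarea formula to $f=d(\cdot,\partial K)$ and use the nearest-point geodesic to get $|\nabla f|=1$ a.e. The paper works in a conformal chart $ds^2=h^2|dx|^2$, shows $Jf=h$, and integrates $g=h^{n-1}$ in the Euclidean coarea formula, which amounts to your use of the Riemannian coarea formula. Your display containing the ``a.e.\ issue'' term is garbled and should simply read $|K|=\int_K|\nabla f|\,d\mathrm{vol}=\int_0^{r(K)}\mathcal H^{n-1}(f^{-1}(t))\,dt$.

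The genuine error is your first fact. For $c<0$ the set $K_t$ is in general \emph{not} convex. The argument you give is also a non sequitur: an infimum of convex functions need not be concave. In $\mathbb{H}^n$ the distance to a totally geodesic hyperplane $H$ is convex, so its superlevel sets on one side are complements of convex tubes. Concretely, take a geodesic triangle $T\subset\mathbb{H}^2$ and small $t>0$. Then $\partial T_t$ contains arcs of the equidistant curves $\{d(\cdot,H_i)=t\}$, whose curvature $\tanh t$ points toward $H_i$, i.e.\ out of $T_t$. So $T_t$ has inward-bulging sides and is not convex. This is consistent with the Riccati law $\lambda'=\lambda^2+c$ quoted in the paper, which gives $\lambda(t)=-\tanh t$ when $\lambda(0)=0$, $c=-1$. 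You have the two nonflat cases reversed: it is on a hemisphere that $d(\cdot,H)=\arcsin\langle x,n\rangle$ is concave. So the infimum argument works for $c>0$ and fails for $c<0$.

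Fortunately, convexity is not needed at all. For $t>0$, the set $\{f>t\}$ is open and contained in $K_t=\{f\ge t\}$, so $\partial K_t\subseteq\{f=t\}$. Conversely, suppose $f(x)=t$ and choose $y\in\partial K$ with $d(x,y)=t$. Every $z\ne x$ on the minimizing geodesic from $x$ to $y$ satisfies $f(z)\le d(z,y)=t-d(x,z)<t$. Hence $x$ is a limit of points of $K\setminus K_t$, and so $x\in\partial K_t$. Thus $f^{-1}(t)=\partial K_t$ for every $t\in(0,r(K)]$, with $|\partial K_t|$ understood as $\mathcal H^{n-1}(\partial K_t)$ for a possibly nonconvex $K_t$. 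This uses the same nearest-point geodesic you already used for $|\nabla f|=1$. Replacing your two facts by it makes your proof complete and, in substance, identical to the paper's, which makes this identification tacitly when it writes the coarea formula directly over $\partial K_t$.
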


\begin{proof}
    Let $U\subset \mathbb R^n$ be a local chart of $M^n(c)$ containing $K$ and such that the Riemannian metric is $ds^2=(h(x))^2(dx_1^2+\cdots+dx_n^2)$ with $h(x)=\frac2{(1+c|x|^2)}$. 
    Consider the function $f\colon K\to \R$ given by $f(x)=d_M(x,\partial K)$, the Riemannian distance from $x$ to $\partial K$. Clearly,
    \[
    d_M(y,\partial K)\leq d_M(y,x)+ d_M(x,\partial K),\qquad d_M(x,\partial K)\leq d_M(x,y)+ d_M(y,\partial K)
    \]
    for any $x,y\in K$, which yields 
    \begin{equation}\label{eq:lipschitz}
     |f(x)-f(y)|=|d_M(x,\partial K)-d_M(y,\partial K)|\leq d_M(x,y).   
    \end{equation}
    For a uniform constant $C$ we have $d_M(x,y)<C|y-x|$ for all $x,y\in K$, and therefore $f$ is Lipschitz. We can thus apply the coarea formula \cite[3.2.12]{federer} to get
    \begin{equation}\label{eq:coarea}
    \int_{\R^n}g(x) Jf(x) dx=\int_0^{r(K)}\int_{\partial K_t} g(x) d\mathcal H^{n-1}(x) dt
    \end{equation}
    where $\mathcal H^{n-1}$ is the Hausdorff measure with respect to the Euclidean metric, $Jf$ is the Euclidean Jacobian of $f$, and $g$ is any integrable function.
    
    By Rademacher's theorem, $f$ is differentiable at almost every $x\in K$. Given such $x$, note that $Jf(x)\leq h(x)$ by \eqref{eq:lipschitz}. Moreover, if $\gamma(t)$ is a geodesic realizing $d_M(x,\partial K)$ (i.e. such that $\gamma(0)=x$  and  $\gamma(f(x))\in\partial K$), then
    \[
    \left. \frac{d}{dt} \right|_{t=0} f(\gamma(t))=-1.
    \]
It follows that $Jf(x)=\frac1{|\gamma'(0)|}=h(x)$. Hence, taking  $g(x)=(h(x))^{n-1}$, equality \eqref{eq:coarea} becomes
\[
|K|=\int_K \left(h(x)\right)^n dx=\int_0^{r(K)}\int_{\partial K_t} \left(h(x)\right)^{n-1} d\mathcal H^{n-1}(x) dt=\int_0^{r(K)} |\partial K_t|dt.
\]    
\end{proof}

 By \cite[Theorem B]{DT}, if $K$ is a $\lambda$-convex body and $L$ is a $\lambda$-convex lens, then 
\begin{equation}
    \label{Eq:Inrad}
    r(K) \ge r(L) \quad \text{ provided }\quad |\partial K| = |\partial L|,
\end{equation}
with equality if and only if $K$ is a $\lambda$-convex lens.  In the 2-dimensional case this result was proved in  \cite{MilInradius} and later extended to Aleksandrov spaces in  \cite{Dr}.

We need to recall the results of \cite[Section 3.1]{DT}, where the variation formula for inner parallel $\lambda$-convex polyhedra is derived in the Euclidean space. For model spaces, the proof is similar; we present only some of the key steps.

By \cite[Corollary 3.3]{gray}, the inner parallel body $K_t$ of a $\lambda$-convex polyhedron $K$ is a $\lambda(t)$-convex polyhedron with 
\[
\lambda'(t)=\lambda(t)^2+c.
\]

We will denote the set of edges of $K_t$ as $\mathcal E(t)$.  As before, given $E \in \mathcal E(t)$ we will denote by $\ell_E$ the length of $E$, and by $\beta_E$ the angle between the outer normals at $E$.

\begin{theorem}[Variation of the surface area for inner parallel polyhedra]
    \label{Thm:Variation}
    If $K \subset M^n(c)$ is a $\lambda$-convex polyhedron, then except for a finite number of $t_0 \in [0,r(K)]$,
    \[
    \left.\frac{d}{dt}\right|_{t=t_0} |\partial K_t| = -(n-1)\lambda(t_0)\cdot |\partial K_{t_0}| - 2\sum_{E \in \mathcal E(t_0)} \ell_E \cdot \tan\frac{\beta_E}{2}.
    \]
\end{theorem}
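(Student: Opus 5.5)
The plan is to decompose $\partial K_{t}$ for $t$ near $t_0$ into facets, and to differentiate the area of each facet piece under the normal flow. By \cite[Corollary 3.3]{gray}, $K_t$ is a $\lambda(t)$-convex polyhedron whose facets are the inner parallel pieces of the $\lambda$-spheres bounding $K$. Each facet $F(t)$ lies on a totally umbilical hypersurface $\Ss_{\lambda(t)}$ obtained by moving $\Ss_\lambda$ inward a distance $t$. The combinatorial type of $K_t$ changes only at finitely many times. These are the times when a facet disappears, or an edge (or lower-dimensional face) degenerates. Away from these times the combinatorics is locally constant in $t$, and we fix such a $t_0$. Replacing $K$ by $K_{t_0}$ and using $(K_{t_0})_s=K_{t_0+s}$, it suffices to treat $t_0=0$.

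\textbf{Contribution of a facet interior.} For a piece of a totally umbilical hypersurface with principal curvatures all equal to $\lambda$, the inward normal flow at unit speed changes the area element by $-(n-1)\lambda$ times the area element; this is the first variation formula, whose rate is the mean curvature. Integrating over $\partial K$ produces the term $-(n-1)\lambda|\partial K|$. However, this alone is not the full derivative. The facet pieces of $K_t$ are not exactly the normal images of the facets of $K$: near each edge, part of the moved facet is cut off, because neighbouring facets now intersect along a displaced edge.

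\textbf{Edge correction.} Consider an $(n-2)$-dimensional edge $E$ where facets $F_1,F_2$ meet, with angle $\beta_E$ between the normals. Under inward displacement by $t$, the new edge is the intersection of the two moved hypersurfaces. To first order, in the 2-plane orthogonal to $E$, each facet loses a strip of width $t\tan(\beta_E/2)$ compared to the normal image of its old region. This is the planar computation: two lines meeting at exterior angle $\beta$, each moved inward by $t$, meet at a point whose projection onto each line is shifted by $t\tan(\beta/2)$ from the original corner. Hence the area loss is $2\ell_E\, t\tan(\beta_E/2)+o(t)$, where $\ell_E$ denotes the $(n-2)$-volume of $E$. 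Curvature of the ambient space and of the facets affects only higher-order terms, since all geometry is smooth near an interior point of $E$ at fixed combinatorics. Lower-dimensional faces (vertices when $n=3$) contribute $O(t^2)$ to the area change: they affect regions of diameter $O(t)$ around codimension $\ge 2$ faces of $\partial K$. So they do not enter the derivative.

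\textbf{Main obstacle.} The delicate step is making the edge estimate rigorous and uniform along $E$ up to its endpoints, in a curved ambient space, and controlling the $o(t)$ error. I would use Fermi coordinates along $E$, or equivalently work in the conformally flat chart used in the proof of Theorem~\ref{Eq:Der}. In these coordinates I would write the moved facets as graphs over the original ones, smooth in $t$, and compute the position of the new edge via the implicit function theorem. Transversality holds because $0<\beta_E<\pi$. The same argument, combined with the finiteness of combinatorial changes, shows that $|\partial K_t|$ is differentiable at all but finitely many $t_0$. At those $t_0$ the two contributions sum to the formula in Theorem~\ref{Thm:Variation}.
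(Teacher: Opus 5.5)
Your proposal is correct and follows essentially the same route as the paper. There too the derivative of each facet's area splits into the first-variation term $-(n-1)\lambda(t_0)|F_i(t_0)|$ from the umbilical normal flow, plus an edge term $-\sum_j \ell_{E_{ij}}\tan(\beta_{E_{ij}}/2)$. The only difference is bookkeeping: the paper pulls $F_i(t)$ back along the normal flow to the fixed hypersurface through $F_i(t_0)$ and cites \cite[Theorem 3.3, Lemma 3.6]{DT} for the shrinking polytope there, whereas you compare with the forward normal image of the old facet and compute the $t\tan(\beta_E/2)$ strip loss by hand.
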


\begin{proof}
    The proof follows the steps of \cite[Section 3.1]{DT}. Assume that $K_t$ has the same combinatorial structure for all $t$ near $t_0$, which is the case except for finitely many $t_0\in [0,r(K)]$.

    If $F_i(t)$ is the $i$-th smooth facet of $K_t$, then the claim will follow once we show that
    \begin{equation}
    \label{Eq:Area0}
    \left.\frac{d}{dt}\right|_{t=t_0}|F_i(t)| = -(n-1)\lambda(t_0) \cdot |F_i(t_0)| - \sum_{j} \ell_{E_{ij}} \cdot \tan \frac{\beta_{E_{ij}}}{2},
    \end{equation}
where the sum is taken over all edges $E_{ij}$ adjacent to $F_i(t_0)$.

For $t>s > t_0$, let $F_i(t,s)$ be the outer-parallel hypersurface at distance $t-s$ from $F_i(t)$; i.e.,
\[
F_i(t,s) = \{\exp_x((t-s) N(x)) : x \in F_i(t)\},
\]
where $\exp_x$ is the exponential map, and $N(x)$ is the outer normal to $F_i(t)$ at $x$. Put also $\tilde F_i(t)=F_i(t,t_0)$, and note that $\tilde F_i(t)$ is a convex polyhedron in a fixed complete totally umbilical hypersurface $\mathcal B_{i,t_0}$ of curvature $\lambda(t_0)$ containing $F_i(t_0)$. By the first variation of the area formula, we have
\[
\frac{d}{ds}| F_i(t,s)|=(n-1)\lambda(s)|F_i(t,s)|.
\]
Solving for $|F_i(t,s)|$ at $s = t_0$ in the previous ODE  we get 
\[
|\tilde F_i(t)|=|F_i(t,t_0)|=|F_i(t)|\exp\left((n-1)\int_{t_0}^t \lambda(u)du\right). 
\]
It follows that
\begin{equation}\label{Eq:derivative1}
\left.\frac{d}{dt}\right|_{t=t_0}|\tilde F_i(t)| = (n-1) \lambda(t_0) \cdot |F_i(t_0)| + \left.\frac{d}{dt}\right|_{t=t_0}|F_i(t)|.
\end{equation}

On the other hand, for the family $t \mapsto \tilde F_i(t) \subset \mathcal B_{i,t_0}$ we can apply \cite[Theorem 3.3]{DT} to conclude, in the same way as in \cite[Lemma 3.6]{DT}, that
\[
\left.\frac{d}{dt}\right|_{t=t_0}|\tilde F_i(t)| = - \sum_{j} \ell_{E_{ij}} \cdot \tan \frac{\beta_{E_{ij}}}{2}.
\]
Comparing with \eqref{Eq:derivative1} yields \eqref{Eq:Area0}. This completes the proof.\end{proof}

\section{Proof of the Main Theorem}

We are now ready to establish our main result. 

Assume first that $K \subset M^3(c)$, $c \neq 0$, is a $\lambda$-convex polyhedron, and $L$ is a $\lambda$-convex lens such that $|\partial K|=|\partial L|$. Consider the functions $f_K(t) := |\partial K_t|$, $t \in [0,r(K)]$ and $f_L(t) := |\partial L_t|$. Note that $L_t$ is a $\lambda(t)$-convex lens.

We claim that
\begin{equation}
\label{Eq:Key}
f_K(t) \ge f_L(t), \qquad \forall\, t \in [0,r(L)],
\end{equation}
and that this inequality is strict on some nonempty open subinterval of $[0,r(L)]$ unless $K$ is a lens. The latter assertion follows from \eqref{Eq:Inrad}. In view of~\eqref{Eq:Volume}, this claim immediately yields the desired inequality in the Main Theorem, together with the characterization of the equality case. It therefore remains to justify~\eqref{Eq:Key}.

Let $\ell^*_t$ be the length of the unique edge of the lens $L_t$, and $\beta^*_t$ be the angle at this edge between the outer normals of the two faces. Then the Gauss--Bonnet theorem (Theorem~\ref{Thm:GB}) applied to $L_t$ yields
\[
(\lambda(t)^2+c)|\partial L_t|+2\lambda(t) \ell_{t_0}^*\cdot\tan\frac{\beta_{t_0}^*}2=4\pi.
\]
By taking $K=K_t$ in \eqref{Eq:GB}, and comparing with the previous equality, we deduce
\begin{equation*}
    \sum_{E \in \mathcal E(t_0)} \ell_E \cdot \tan \frac{\beta_E}{2} \le 
    \ell^*_{t_0} \cdot \tan \frac{\beta^*_{t_0}}{2},
\end{equation*} 
for every $t_0 \in [0,r(L)]$ such that $|\partial K_{t_0}| = |\partial L_{t_0}|$, and this inequality is strict unless $K$ is a lens. By Theorem~\ref{Thm:Variation} applied to $K_t$ and $L_t$ at $t=t_0$, it then follows that
\begin{equation}
\label{Eq:Ineq}
\begin{aligned}
    f_K'(t_0) = \left.\frac{d}{dt}\right|_{t=t_0} |\partial K_t| &\ge 
    \left.\frac{d}{dt}\right|_{t=t_0} |\partial L_t| = f_L'(t_0),
\end{aligned}
    \end{equation}
    for every $t \in [0,r(L)]$ such that $|\partial K_t| = |\partial L_t|$, and again, this inequality is strict unless $K$ is a lens. In particular, since $|\partial K| = |\partial K_0| = |\partial L| = |\partial L_0|$, and if $K$ is not a lens, we conclude that $f_K(t) > f_L(t)$ in some small right neighborhood $t \in [0,\eps)$. Now assume that \eqref{Eq:Key} is violated. Then there exists a minimal $t_0$ such that $f_K(t) \ge f_L(t), \,\,t \in [0,t_0]$ and  
\[
f_K(t_0) = f_L(t_0), \quad f_K'(t_0) \le f_L'(t_0).
\]
This is a contradiction to \eqref{Eq:Ineq}. Claim~\eqref{Eq:Key} is thus justified, and this finishes the proof of the Main Theorem in the case when $K$ is a $\lambda$-convex polyhedron.

Finally, if $K$ is an arbitrary $\lambda$-convex body, the argument follows \cite[Subsection~3.3]{DT} verbatim by approximation. The construction therein is purely metric, does not depend on the curvature of the ambient space, and relies only on the rigidity case of~\eqref{Eq:Inrad}. \qed

\section{Proof of the reverse isoperimetric inequality in $\Hh^2(-1)$}

In this section, we will demonstrate how our approach gives an alternative proof to the following theorem, originally established in \cite{DrLob}:

\begin{theorem}
    If $K \subset \Hh^2(-1)$ is a $\lambda$-convex body and $L \subset \Hh^2(-1)$ is a $\lambda$-convex lens such that $|\partial K| = |\partial L|$, then $|K| \ge |L|$. Moreover, equality is possible if and only if $K$ is a $\lambda$-convex lens.
\end{theorem}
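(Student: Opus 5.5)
We follow the same scheme as in the proof of the Main Theorem, now in dimension two with $c=-1$. We first treat the case where $K$ is a $\lambda$-convex polygon, i.e. an intersection of finitely many $\lambda$-discs, and then pass to general $K$ by approximation, exactly as in \cite[Subsection~3.3]{DT}.

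The two-dimensional Gauss--Bonnet formula replaces Theorem~\ref{Thm:GB}. If $K$ is a $\lambda$-convex polygon with vertices $v\in\mathcal V$ and exterior angles $\theta_v\in(0,\pi)$, then the sides have geodesic curvature $\lambda$. Gauss--Bonnet then gives
\[
\lambda\,|\partial K| + \sum_{v\in\mathcal V}\theta_v = 2\pi + |K|.
\]
This differs from the three-dimensional case because the area $|K|$ appears explicitly, with coefficient $-c=1$. By \cite[Corollary 3.3]{gray}, the inner parallel body $K_t$ is a $\lambda(t)$-convex polygon with $\lambda'=\lambda^2-1$. For such polygons the planar analogue of Theorem~\ref{Thm:Variation} reads
\[
\frac{d}{dt}|\partial K_t| = -\lambda(t)|\partial K_t| - 2\sum_{v}\tan\frac{\theta_v}{2},
\]
valid away from finitely many combinatorial changes. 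This is the case $n=2$ of the same derivation: each side shrinks by the factor coming from its curvature, and each vertex contributes $-2\tan(\theta_v/2)$.

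Set $f_K(t)=|\partial K_t|$ and $f_L(t)=|\partial L_t|$; we aim to show $f_K\ge f_L$ on $[0,r(L)]$. Since $r(K)\ge r(L)$ by \eqref{Eq:Inrad}, integrating with Theorem~\ref{Eq:Der} then gives $|K|\ge|L|$. For this we again use a first-touching argument.
\begin{itemize}
\item At a time $t_0$ where $f_K(t_0)=f_L(t_0)$, Gauss--Bonnet gives $\sum_v\theta_v - 2\theta^* = |K_{t_0}|-|L_{t_0}|$, where $\theta^*$ is the exterior angle at each of the two vertices of the lens.
\item Assume $t_0$ is the first time after which the inequality fails, so $f_K\ge f_L$ on $[0,t_0]$. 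Integrating \eqref{Eq:Volume} over $[t_0,\,\cdot\,]$ does not directly bound $|K_{t_0}|$. Instead we use $|K_{t_0}|=\int_{t_0}^{r(K)} f_K$ together with the inductive hypothesis in the form of an integral comparison. A cleaner route is to treat the pair $(f(t),A(t))$, with $A(t)=|K_t|$, as an ODE system, where $A'=-f$.
\end{itemize}

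Concretely, I would instead prove the joint statement $|K_t|\le |L'_t|$-type comparisons. Alternatively, and more simply, I would use concavity: since $\tan$ is convex, Jensen's inequality gives $\sum_v\tan(\theta_v/2)\le$ the tangent sum for a two-vertex configuration with the same total angle. The total angle of $K_{t_0}$ is at most that of $L_{t_0}$ whenever $|K_{t_0}|\le|L_{t_0}|$. More precisely, convexity of $\tan$ on $[0,\pi/2)$ together with $\theta_v<\pi$ and the fact that the total angle is bounded by $2\theta^*+(|K_{t_0}|-|L_{t_0}|)$ should give $\sum\tan(\theta_v/2)\le 2\tan(\theta^*/2)$, provided $|K_{t_0}|\le|L_{t_0}|$. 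If instead $|K_{t_0}|>|L_{t_0}|$, I would aim to conclude directly by comparing the remaining integrals.

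The main obstacle is the extra area term in the two-dimensional Gauss--Bonnet formula, which makes the pointwise comparison of derivatives at touching times depend on the sign of $|K_{t_0}|-|L_{t_0}|$. I would handle it with a lexicographic first-violation argument on the pair $(f,A)$: take $t_0$ to be the first time at which either $f_K=f_L$ with $f_K'<f_L'$ or the area comparison flips, and show that each event contradicts the other via $A'=-f$.
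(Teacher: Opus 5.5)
Your setup matches the paper: the reduction to polygons, the identity $\lambda|\partial K|+\sum_v\theta_v=2\pi+|K|$, the planar variation formula, and the observation that at a touching time $\sum_v\theta_v-2\theta^*=|K_{t_0}|-|L_{t_0}|$. The gap is the step that turns a bound on the \emph{total} angle into $\sum_v\tan(\theta_v/2)\le 2\tan(\theta^*/2)$. Your Jensen argument runs the wrong way. Since $\tan$ is convex, at fixed total angle $\sum_v\tan(\theta_v/2)$ is \emph{minimized} by equal angles, so convexity alone gives no upper bound. As a numerical implication the step is false: take $\theta_1>\theta^*>\theta_2$ with $\theta_1+\theta_2=2\theta^*$; then $\tan(\theta_1/2)+\tan(\theta_2/2)>2\tan(\theta^*/2)$. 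Knowing $\theta_v<\pi$ does not help either, since $\tan(\theta_v/2)$ is unbounded as $\theta_v\to\pi$.

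The missing ingredient is the geometric per-vertex bound $\theta_v\le\theta^*$, valid at every vertex of a $\lambda$-convex polygon whose perimeter equals that of the lens. The paper gets it from Theorem~\ref{Thm:Bla}. $K$ lies in the (possibly unbounded) region $\tilde L$ cut out by the two supporting $\lambda$-discs whose arcs meet at $v$, and $\tilde L$ has exterior angle $\theta_v$ at $v$. Monotonicity of perimeter under inclusion gives $|\partial L|=|\partial K|\le|\partial\tilde L|$. The perimeter of such a lens \emph{decreases} as its exterior angle grows, so $\theta_v\le\theta^*$. With this cap, the fact that $x\mapsto x^{-1}\tan(x/2)$ is increasing on $(0,\pi)$ gives $\tan(\theta_v/2)\le(\theta_v/\theta^*)\tan(\theta^*/2)$. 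Summing and using $\sum_v\theta_v\le2\theta^*$ yields the required bound, strictly when $|K_{t_0}|<|L_{t_0}|$.

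Your treatment of the sign of $|K_{t_0}|-|L_{t_0}|$ is also unfinished, and no lexicographic argument on $(f,A)$ is needed. Do not try to prove $f_K\ge f_L$ unconditionally; the method cannot give that when $|K|>|L|$. Instead argue by contradiction from $|K|<|L|$.
\begin{itemize}
\item At $t=0$ the derivative comparison is then strict, so $f_K>f_L$ just after $0$.
\item If $f_K\ge f_L$ on all of $[0,r(L)]$, then \eqref{Eq:Volume} and \eqref{Eq:Inrad} give $|K|\ge|L|$, a contradiction.
\item Otherwise take the first $t_1>0$ with $f_K(t_1)=f_L(t_1)$ and $f_K'(t_1)\le f_L'(t_1)$. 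There $\int_0^{t_1}f_K\ge\int_0^{t_1}f_L$, so $|K_{t_1}|=|K|-\int_0^{t_1}f_K<|L|-\int_0^{t_1}f_L=|L_{t_1}|$.
\end{itemize}
So the unfavourable sign never occurs, and the strict derivative inequality at $t_1$ contradicts the choice of $t_1$. The equality case must still be handled separately; the paper does this through the rigidity case of the inradius inequality, as in the Euclidean argument.
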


\begin{proof}
    As in the proof of the Main Theorem, we can focus only on the case when $K$ is a $\lambda$-convex polygon. Let $e_0, \ldots, e_{m-1}$ ($m \ge 2$) be sides of $K$ and $\beta_i$ be the angle between the outer normals at the common vertex of $e_i$ and $e_{i+1}$ (indices are taken modulo $m$).    
    Likewise, let $\beta^*$ be the angle between the outer normals to the sides at the vertices of $L$. 

   We have that 
    \begin{equation}
        \label{Eq:Angles}
        \beta_i \le \beta^*, \quad \forall i.
    \end{equation}
    Indeed, for a fixed $i$, by Blaschke's rolling theorem, $K$ is contained in the set $\tilde{L}$ (compact or not), which is formed as the intersection of two supporting $\lambda$-balls that form sides $e_i$ and $e_{i+1}$. Thus, $|\partial K| \le |\partial \tilde{L}|$ by monotonicity and $|\partial L| \le |\partial \tilde{L}|$. We note that $\beta_i$ is the angle between the outer normals to the sides at a vertex of~$\tilde{L}$. 
    Since the perimeter of $\tilde L$ is non-decreasing with respect to $\beta_i$, we get that $\beta_i \le \beta^*.$

    By the two-dimensional Gauss--Bonnet theorem applied to $K$, we conclude
    \begin{equation}\label{Eq:GB2}
    2\pi = -|K| + \lambda |\partial K| + \sum_{i=0}^{m-1} \beta_i.
    \end{equation}
    Similarly, for the lens $L$, the Gauss--Bonnet formula gives
    \[
    2\pi = -|L| + \lambda |\partial L| + 2 \beta^*.
    \]
    Therefore, we conclude that 
    \begin{equation}
        \label{Eq:Cond}
        \sum_{i=0}^{m-1} \beta_i \le 2\beta^*, \quad \quad \text{whenever }\quad |\partial K| = |\partial L| \quad \text{and} \quad |K| \le |L|. 
    \end{equation}    

As in the proof of the Main Theorem, let $K_t$ be the inner parallel body for $K$ at distance $t \in [0, r(K)]$, and similarly $L_t$. Note that $K_t$ is a $m_t$-sided polygon, with $m_t \le m$. Recall that $r(L) \le r(K)$ provided that $|\partial K| = |\partial L|$ (see \eqref{Eq:Inrad}).

Since for every $t$, both $K_t$ and $L_t$ are $\lambda_t$-convex (for some $\lambda_t$ that can be computed explicitly depending on $t$ and the curvature of the ambient space), we can specify \eqref{Eq:Cond} for every $t_0 \in [0,r(L)]$:
\begin{equation}
        \label{Eq:Cond2}
        \sum_{i=0}^{m_{t_0}-1} \beta_{i,t_0} \le 2\beta^*_{t_0}, \quad \quad \text{whenever }\quad |\partial K_{t_0}| = |\partial L_{t_0}| \quad \text{and} \quad |K_{t_0}| \le |L_{t_0}|. 
    \end{equation}    
Here, $\beta_{i,t}$, $i \in \{0, \ldots, m_t - 1\}$, and $\beta_{*,t}$ are the corresponding angles of $K_t$ and $L_t$.

By Theorem~\ref{Thm:Variation},
\begin{equation}
\label{Eq:Der}
\begin{aligned}
    \left.\frac{d}{dt}\right|_{t=t_0}|\partial K_t| - \left.\frac{d}{dt}\right|_{t=t_0}|\partial L_t| &= 2 \tan \frac{\beta^*_{t_0}}{2} - 2 \sum_{i=0}^{m_{t_0}-1} \tan \frac{\beta_{i,t_0}}{2}
\end{aligned}
\end{equation}
for every $t_0 \in [0, r(L)]$ such that $|\partial K_{t_0}|= |\partial L_{t_0}|$.

Observe that $2x^{-1}\tan(\frac{x}2)$ is a monotone increasing function for $x \in [0, \pi)$. Assuming $|\partial K_{t_0}| = |\partial L_{t_0}|$ and $|K_{t_0}| \le |L_{t_0}|$, we can combine \eqref{Eq:Angles} and \eqref{Eq:Cond2} to conclude that 
\[
\sum_{i=0}^{m_{t_0}-1} \tan \frac{\beta_{i,t_0}}{2} =\sum_{i=0}^{m_{t_0}-1} \frac{\beta_{i,t_0}}{2} \cdot \frac{2}{\beta_{i,t_0}} \tan \frac{\beta_{i,t_0}}{2} \le \frac{2}{\beta^*_{t_0}} \tan \frac{\beta^*_{t_0}}{2} \sum_{i=0}^{m_{t_0}-1} \frac{\beta_{i,t_0}}{2} \le 2\tan \frac{\beta^*_{t_0}}{2}.
\]
Together with \eqref{Eq:Der}, this gives
\begin{equation}
    \label{Eq:Der2}
    \begin{aligned}    
    \left.\frac{d}{dt}\right|_{t=t_0}|\partial K_t| \ge  \left.\frac{d}{dt}\right|_{t=t_0}|\partial L_t| \quad &\text{for every }\quad t_0 \in [0, r(L)] \quad \\
    &\text{ such that } \quad |\partial K_{t_0}|= |\partial L_{t_0}| \quad \text{and} \quad |K_{t_0}| \le |L_{t_0}|. 
    \end{aligned}
\end{equation}
Moreover, the first inequality is strict if the last inequality $|K_{t_0}| \le |L_{t_0}|$ is also strict. 

Towards the contradiction, assume that $|K| <|L|$. Then we can apply \eqref{Eq:Der2} to conclude that 
\[
\left.\frac{d}{dt}\right|_{t=0}|\partial K_t| >  \left.\frac{d}{dt}\right|_{t=0}|\partial L_t|.
\]
Hence, the graph of the function $t \mapsto |\partial L_t|$ lies below the graph of the function $t \mapsto |\partial K_t|$ in some right neighborhood of $0$. If this is true for all $t \in [0,r(L)]$, we conclude that $|K| \ge |L|$, a contradiction. Therefore, there must exist the smallest $t_1 > 0$ such that 
\begin{equation}
\label{Eq:Der3}    
|\partial K_{t_1}| = |\partial L_{t_1}| \quad \text{ and }\quad \left.\frac{d}{dt}\right|_{t=t_1}|\partial K_t| \le   \left.\frac{d}{dt}\right|_{t=t_1}|\partial L_t|
\end{equation}
and the graph of $t \mapsto |\partial L_t|$ goes above the graph of $t \mapsto |\partial K_t|$ in a right neighborhood of $t_1$. By the choice of $t_1$, we know that 
\[
\int_0^{t_1}|\partial K_t|dt \ge \int_0^{t_1}|\partial L_t|dt. 
\]
Thus, using the assumption $|K| < |L|$,
\[
|K_{t_1}| = |K| - \int_0^{t_1}|\partial K_t|dt < |L| - \int_0^{t_1}|\partial L_t|dt = |L_{t_1}|.
\]
Therefore, at $t = t_1$, we have $|K_{t_1}| < |L_{t_1}|$ and $|\partial K_{t_1}| = |\partial L_{t_1}|$. Hence, by \eqref{Eq:Der2}, we must have $\left.\frac{d}{dt}\right|_{t=t_1}|\partial K_t| >  \left.\frac{d}{dt}\right|_{t=t_1}|\partial L_t|$. This is a contradiction to the inequality in \eqref{Eq:Der3}. This is a final contradiction that justifies $|K| \ge |L|$.

The equality case follows as in \cite{DT} (using the rigidity of the inequality in \cite{Dr}).
\end{proof}

\begin{remark}
    Note that the proof above does not work in the $2$-dimensional sphere because in that case the area $|K|$ appears with a different sign in the Gauss--Bonnet formula \eqref{Eq:GB2}.
\end{remark}

\end{document}